\def\fl#1{\left\lfloor#1\right\rfloor}
\newtheorem{theorem}{Theorem}
\newtheorem{Prop}{Proposition}
\begin{document}

\title{A Short Note On Laguerre Polynomials}

\author{
Praveen Agarwal\\
\small Department of Mathematics\\
\small Anand International College of Engineering\\
\small Jaipur 303 012 India\\
\small \texttt{praveen.agarwal@anandice.ac.in}\\\\
Takao Komatsu\\
\small Department of Mathematics, School of Science\\
\small Zhejiang Sci-Tech University\\
\small Hangzhou 310018 China\\
\small \texttt{komatsu@zstu.edu.cn}
}

\date{
\small MR Subject Classifications: 11B68, 11B37, 11C20, 15A15, 33C20.
}

\maketitle

\begin{abstract}
Motivated by the work of Prajapati \emph{et al.} \cite{PAA}, here we study some explicit form of the generalized Laguerre polynomials $L_{\fl{\frac{n}{q}}}^{(\alpha,\beta)}(z)$, when $q=1$. \\
{\bf Key words and phrases}: Gamma function, Laguerre polynomials, generalized Laguerre polynomials.
\end{abstract}

\section{Introduction}

In \cite{PAA}, for $\alpha,\beta\in\mathbb C$ with $\mathfrak{Re}(\beta)>-1$ and $m,q\in\mathbb N:=\{1,2,\dots\}$ the polynomial
\begin{equation}
L_{\fl{\frac{n}{q}}}^{(\alpha,\beta)}(z)=\frac{\Gamma(\alpha n+\beta+1)}{\Gamma(n+1)}\sum_{k=0}^{\fl{\frac{n}{q}}}\frac{(-n)^{(q k)}}{\Gamma(\alpha k+\beta+1)}\frac{z^k}{k!}
\label{pol:paa}
\end{equation}
is introduced and studied, where $\fl{n/q}$ denotes the integral part of $n/q$ and $(\gamma)^{(n)}=\Gamma(\gamma+n)/\Gamma(\gamma)$ ($n\ge 0$) denotes the rising factorial.
When $q=\alpha=1$,  $L_n^{(\beta)}(z)=L_n^{(1,\beta)}(z)$ is the generalized Laguerre polynomial (\cite{Rainville}).

The closed form of $L_{\fl{\frac{n}{q}}}^{(\alpha,\beta)}(z)$ is also given as follows.

\begin{Prop}
If $\alpha$ is a positive integer,
$$
L_{\fl{\frac{n}{q}}}^{(\alpha,\beta)}(z)=\sum_{j=0}^{\fl{\frac{n}{q}}}(-1)^j\binom{\alpha n+\beta}{\alpha(n-j)}\frac{\bigl(\alpha(n-j)\bigr)!}{(n-q j)!}\frac{z^j}{j!}\,.
$$
\label{th:closedform}
\end{Prop}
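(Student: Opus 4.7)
The plan is direct: I will match the $k$-th summand in \eqref{pol:paa} with the $j$-th summand of the claimed formula (via $j=k$) by two independent simplifications---one for the rising factorial $(-n)^{(qk)}$ and one for the Gamma-function quotient.

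First, since $k\leq\fl{n/q}$ forces $qk\leq n$, the rising factorial collapses to a falling factorial up to sign:
$$(-n)^{(qk)}=\prod_{i=0}^{qk-1}(-n+i)=(-1)^{qk}\,\frac{n!}{(n-qk)!}.$$
This already accounts for the $(n-qk)!$ in the denominator of the target expression and, in the principal case $q=1$ that the paper focuses on, produces the required $(-1)^k$.

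Second, because $\alpha$ is a positive integer, $\alpha(n-k)$ is a nonnegative integer, so $\Gamma(\alpha(n-k)+1)=\bigl(\alpha(n-k)\bigr)!$ and the Gamma-form of the binomial coefficient
$$\binom{\alpha n+\beta}{\alpha(n-k)}=\frac{\Gamma(\alpha n+\beta+1)}{\Gamma(\alpha(n-k)+1)\,\Gamma(\alpha k+\beta+1)}$$
rearranges to $\Gamma(\alpha n+\beta+1)/\Gamma(\alpha k+\beta+1)=\binom{\alpha n+\beta}{\alpha(n-k)}\,\bigl(\alpha(n-k)\bigr)!$, thereby producing the binomial coefficient and the $\bigl(\alpha(n-k)\bigr)!$ that appear in the claimed formula.

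Substituting both identities into \eqref{pol:paa} and cancelling $n!=\Gamma(n+1)$ in the prefactor recovers the asserted closed form term-by-term. The whole argument is pure algebraic bookkeeping; the only point that requires any care is reconciling the sign $(-1)^{qk}$ produced in the first step with the $(-1)^j$ appearing in the statement, which is immediate in the focal case $q=1$. I do not expect any substantive obstacle.
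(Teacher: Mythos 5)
Your verification is correct, and in fact the paper states Proposition \ref{th:closedform} without any proof, so there is no in-text argument to compare against; the direct term-by-term identification you carry out --- $(-n)^{(qk)}=(-1)^{qk}\,n!/(n-qk)!$ (legitimate because $k\le\fl{n/q}$ gives $qk\le n$) combined with $\Gamma(\alpha n+\beta+1)/\Gamma(\alpha k+\beta+1)=\binom{\alpha n+\beta}{\alpha(n-k)}\bigl(\alpha(n-k)\bigr)!$, which uses $\alpha(n-k)\in\Z_{\ge 0}$ --- is the natural and essentially unique route, and both identities are applied correctly. The one issue you flag but do not close is genuine: your computation yields the sign $(-1)^{qk}$, whereas the Proposition prints $(-1)^j$, and these agree for all $j$ only when $q$ is odd. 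So as a proof of the statement exactly as printed for arbitrary $q\in\N$, your argument is complete only for odd $q$ (in particular for the paper's focal case $q=1$, which is all that Theorems \ref{th:rec}--\ref{th:ex2} use); for even $q$ either the Proposition's sign should read $(-1)^{qj}$ or the hypothesis should restrict $q$. You should state this explicitly rather than leave it as ``I do not expect any substantive obstacle,'' since it is the only place where the claimed identity can actually fail.
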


\noindent
{\it Remark.}
When $q=\alpha=1$, we have the closed form of the generalized Laguerre polynomials:
$$
L_n^{(\beta)}(z)=\sum_{j=0}^n(-1)^j\binom{n+\beta}{n-j}\frac{z^j}{j!}
$$
(\cite[(112.3)]{Rainville}).

The recurrence relation for $q=1$ can be given as follows.

\begin{theorem}
For $n\ge 3$,
$$
L_n^{(\alpha,\beta)}(z)=-\frac{z}{n}L_{n-1}^{(\alpha,\beta)}(z)+\sum_{m=1}^n(-1)^{m-1}\bigl(\alpha(n-1)+\beta\bigr)_{(m-1)\alpha}C_{n,m}L_{n-m}^{(\alpha,\beta)}(z)\,,
$$
where
$$
C_{n,m}=C_{n,m}(\alpha,\beta):=\frac{1}{m!n}\sum_{j=0}^m(-1)^j\binom{m}{j}(n-j)\bigl(\alpha(n-j)+\beta\bigr)_{\alpha}
$$
and $(\gamma)_n:=\Gamma(\gamma+1)/\Gamma(\gamma-n+1)$ ($n\ge 0$) denotes the falling factorial.
\label{th:rec}
\end{theorem}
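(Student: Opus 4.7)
The plan is to compare the coefficient of each power $z^j$ on both sides of the claimed identity, using the closed form of Proposition~\ref{th:closedform} (with $q=1$). First, using
\begin{equation*}
\binom{\alpha n + \beta}{\alpha(n-j)} = \frac{\Gamma(\alpha n+\beta+1)}{\Gamma(\alpha(n-j)+1)\,\Gamma(\alpha j+\beta+1)},
\end{equation*}
the factor $\Gamma(\alpha(n-j)+1)$ cancels with $(\alpha(n-j))!$, collapsing the closed form to the compact expression
\begin{equation*}
[z^j]\,L_n^{(\alpha,\beta)}(z)=\frac{(-1)^j\,\Gamma(\alpha n+\beta+1)}{j!\,(n-j)!\,\Gamma(\alpha j+\beta+1)}\qquad(0\le j\le n).
\end{equation*}
From this, the coefficient of $z^n$ in $L_n^{(\alpha,\beta)}(z)$ and in $\frac{z}{n}L_{n-1}^{(\alpha,\beta)}(z)$ are $(-1)^n/n!$ and $(-1)^{n-1}/n!$, so $L_n^{(\alpha,\beta)}(z)+\frac{z}{n}L_{n-1}^{(\alpha,\beta)}(z)$ has degree at most $n-1$, consistent with the right side.

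Next, I would move the $\frac{z}{n}L_{n-1}$ term to the left and extract the coefficient of $z^j$. The key simplification is the telescoping
\begin{equation*}
\bigl(\alpha(n-1)+\beta\bigr)_{(m-1)\alpha}\,\Gamma\bigl(\alpha(n-m)+\beta+1\bigr)=\Gamma\bigl(\alpha(n-1)+\beta+1\bigr),
\end{equation*}
which removes the $m$-dependence of the gamma factors on the right. Using $(\alpha n+\beta)_\alpha=\Gamma(\alpha n+\beta+1)/\Gamma(\alpha(n-1)+\beta+1)$ and setting $M:=n-j$, the desired identity reduces after clearing common factors to the purely combinatorial identity
\begin{equation*}
(\alpha n+\beta)_\alpha-\frac{n-M}{n}\,\bigl(\alpha(n-M)+\beta\bigr)_\alpha=\sum_{m=1}^{M}\frac{(-1)^{m-1}\,M!}{(M-m)!}\,C_{n,m},
\end{equation*}
which has to hold for every $1\le M\le n$ (the case $M=0$ is the leading-term cancellation already verified).

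Finally, I would substitute the definition of $C_{n,m}$ and swap the outer sum over $m$ with the inner sum over $l$, obtaining
\begin{equation*}
\frac{1}{n}\sum_{l=0}^{M}(-1)^{l}(n-l)\bigl(\alpha(n-l)+\beta\bigr)_\alpha\sum_{m=\max(1,l)}^{M}(-1)^{m-1}\binom{M}{m}\binom{m}{l}.
\end{equation*}
Applying $\binom{M}{m}\binom{m}{l}=\binom{M}{l}\binom{M-l}{m-l}$ and $\sum_{k=0}^{N}(-1)^{k}\binom{N}{k}=\delta_{N,0}$ collapses the inner sum: only $l=0$ and $l=M$ survive, contributing $(\alpha n+\beta)_\alpha$ and $-\tfrac{n-M}{n}(\alpha(n-M)+\beta)_\alpha$ respectively, reconstructing the left side. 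The main obstacle is the sign and index-range bookkeeping when swapping the double sum, particularly distinguishing the $l=0$ term (where the outer sum starts at $m=1$) from the cases $l\ge 1$; once that is handled, the identity drops out from the vanishing of $(1-1)^{M-l}$ for $l<M$.
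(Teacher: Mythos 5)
Your proposal is correct and takes essentially the same route as the paper: both expand via the closed form of Proposition~\ref{th:closedform}, cancel the gamma factors through the telescoping $\bigl(\alpha(n-1)+\beta\bigr)_{(m-1)\alpha}\Gamma\bigl(\alpha(n-m)+\beta+1\bigr)=\Gamma\bigl(\alpha(n-1)+\beta+1\bigr)$, and reduce to the same key identity for $\sum_{m}(-1)^{m-1}C_{n,m}/(M-m)!$, established by swapping the double sum and collapsing the alternating binomial sum so that only $l=0$ and $l=M$ survive. Your displayed combinatorial identity is exactly the paper's key lemma multiplied by $(n-j)!$ with $M=n-j$.
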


\noindent
{\it Remark.}
When $\alpha=1$, $C_{n,1}=\frac{2 n-1+\beta}{n}$, $C_{n,2}=\frac{n-1+\beta}{n}$ and $C_{n,m}=0$ ($m\ge 3$). Then, as well-known, the three-term recurrence relation for the generalized Laguerre polynomials is given by
$$
L_n^{(1,\beta)}(z)=\frac{(2 n-1+\beta-z)L_{n-1}^{(1,\beta)}(z)-(n-1+\beta)L_{n-2}^{(1,\beta)}(z)}{n}
$$
(\cite[114.4]{Rainville}).

\begin{proof}[Proof of Theorem \ref{th:rec}.]
By Theorem \ref{th:closedform},
\begin{align*}
&-\frac{z}{n}L_{n-1}^{(\alpha,\beta)}(z)+\sum_{m=1}^n(-1)^{m-1}\bigl(\alpha(n-1)+\beta\bigr)_{(m-1)\alpha}C_{n,m}L_{n-m}^{(\alpha,\beta)}(z)\\
&=-\frac{z}{n}\sum_{j=0}^{n-1}(-1)^j\frac{\Gamma(\alpha(n-1)+\beta+1)}{\Gamma(\alpha j+\beta+1)\Gamma(n-j)}\frac{z^j}{j!}\\
&\quad+\sum_{m=1}^n(-1)^{m-1}\frac{\Gamma(\alpha(n-1)+\beta+1)}{\alpha(n-m)+\beta+1}C_{n,m}\\
&\qquad \times\sum_{j=0}^{n-m}(-1)^j\frac{\Gamma(\alpha(n-m)+\beta+1)}{\Gamma(\alpha j+\beta+1)\Gamma(n-j-m+1)}\frac{z^j}{j!}\\
&=\frac{1}{n}\sum_{j=1}^n(-1)^j\frac{\Gamma(\alpha(n-1)+\beta+1)}{\Gamma(\alpha(j-1)+\beta+1)\Gamma(n-j+1)}\frac{z^j}{(j-1)!}\\
&\quad +\sum_{j=0}^{n-1}\frac{(-1)^j z^j}{j!}\sum_{m=1}^{n-j}\frac{(-1)^{m-1}\Gamma(\alpha(n-1)+\beta+1)C_{n,m}}{\Gamma(\alpha j+\beta+1)\Gamma(n-j-m+1)}\,.
\end{align*}
Since
\begin{align*}
&\sum_{m=1}^{n-j}\frac{(-1)^{m-1}C_{n,m}}{\Gamma(n-j-m+1)}\\
&=\frac{\Gamma(\alpha n+\beta+1)}{\Gamma(\alpha(n-1)+\beta+1)\Gamma(n-j+1)}\\
&\quad+\frac{1}{n}\sum_{l=0}^{n-j}(-1)^l(n-l)\frac{\Gamma(\alpha(n-l)+\beta+1)}{\Gamma(\alpha(n-l-1)+\beta+1)}\sum_{m=l}^{n-j}\binom{m}{l}\frac{(-1)^{m-1}}{m!\Gamma(n-j-m+1)}\\
&=\frac{\Gamma(\alpha n+\beta+1)}{\Gamma(\alpha(n-1)+\beta+1)\Gamma(n-j+1)}
+\frac{(-1)^{n}j\Gamma(\alpha j+\beta+1)}{n\Gamma(\alpha(j-1)+\beta+1)(n-j)!}\,,
\end{align*}
we have
\begin{align*}
&-\frac{z}{n}L_{n-1}^{(\alpha,\beta)}(z)+\sum_{m=1}^n(-1)^{m-1}\bigl(\alpha(n-1)+\beta\bigr)_{(m-1)\alpha}C_{n,m}L_{n-m}^{(\alpha,\beta)}(z)\\
&=\sum_{j=1}^n\frac{(-1)^j\Gamma(\alpha(n-1)+\beta+1)}{\Gamma(\alpha(j-1)+\beta+1)}\frac{z^j}{n(n-j)!(j-1)!}\\
&\quad-\sum_{j=1}^{n-1}\frac{(-1)^j\Gamma(\alpha(n-1)+\beta+1)}{\Gamma(\alpha(j-1)+\beta+1)}\frac{z^j}{n(n-j)!(j-1)!}\\
&\quad+\sum_{j=0}^{n-1}\frac{(-1)^j\Gamma(\alpha n+\beta+1)}{\Gamma(\alpha j+\beta+1)}\frac{z^j}{n(n-j)!j!}\\
&=\sum_{j=0}^n\frac{(-1)^j\Gamma(\alpha n+\beta+1)}{\Gamma(\alpha j+\beta+1)}\frac{z^j}{(n-j)!j!}\\
&=L_{n}^{(\alpha,\beta)}(z)\,.
\end{align*}
\end{proof}

We have an expression of $L_n^{(\alpha,\beta)}(z)$ in terms of determinant.

\begin{theorem}
For $n\ge 1$,
$$
L_n^{(\alpha,\beta)}(z)=\left|\begin{array}{ccccc}
B_{n,1}-\frac{z}{n}&1&0&\cdots&0\\
B_{n,2}&B_{n-1,1}-\frac{z}{n-1}&1&&\vdots\\
B_{n,3}&B_{n-1,2}&&\ddots&0\\
\vdots&\vdots&&B_{2,1}-\frac{z}{2}&1\\
B_{n,n}&B_{n-1,n-1}&\cdots&B_{2,2}&B_{1,1}-z
\end{array}\right|\,,
$$
where $B_{n,m}=\left(\alpha(n-1)+\beta\right)_{(m-1)\alpha}C_{n,m}$ ($n\ge m\ge 1$).
In particular, when $\alpha=1$,
$$
L_n^{(1,\beta)}(z)=\left|\begin{array}{ccccc}
B_{n,1}-\frac{z}{n}&1&0&\cdots&0\\
B_{n,2}&B_{n-1,1}-\frac{z}{n-1}&1&&\vdots\\
0&B_{n-1,2}&&\ddots&0\\
\vdots&\ddots&&B_{2,1}-\frac{z}{2}&1\\
0&\cdots&0&B_{2,2}&B_{1,1}-z
\end{array}\right|\,.
$$
\label{ex:det}
\end{theorem}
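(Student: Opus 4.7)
The plan is to prove the determinantal identity by induction on $n$, expanding the Hessenberg determinant along its first column and matching the result against the recurrence in Theorem \ref{th:rec}.

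The key structural observation is that the displayed matrix is self-similar in the following sense: for every $1\le k\le n$, the bottom-right $k\times k$ submatrix (obtained by deleting the first $n-k$ rows and columns) is precisely the matrix that the theorem assigns to $L_k^{(\alpha,\beta)}(z)$. A direct check on indices confirms this: the $(i,j)$-entry of the bottom-right block equals $B_{k-j+1,\,i-j+1}$ for $j\le i$, equals $1$ when $j=i+1$, and vanishes otherwise, with the appropriate $-z/(k-i+1)$ along the diagonal. This self-similarity is what lets an induction argument propagate without having to track an auxiliary family of minors.

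For the inductive step, assume the identity holds for all sizes smaller than $n$ and expand the $n\times n$ determinant along its first column. The $(i,1)$-entry is $B_{n,1}-z/n$ when $i=1$ and $B_{n,i}$ when $i\ge 2$. For $i\ge 2$, the minor obtained by deleting row $i$ and column $1$ has a block-triangular shape: its top-left $(i-1)\times(i-1)$ block is lower triangular with $1$'s on the diagonal (these come from the superdiagonal $1$'s of the ambient matrix), its top-right $(i-1)\times(n-i)$ block is identically zero (because all entries strictly above the superdiagonal vanish), and its bottom-right $(n-i)\times(n-i)$ block is the Laguerre matrix for $L_{n-i}^{(\alpha,\beta)}(z)$ by the self-similarity above. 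Hence that minor equals $L_{n-i}^{(\alpha,\beta)}(z)$ by the induction hypothesis, while for $i=1$ the minor is simply $L_{n-1}^{(\alpha,\beta)}(z)$. Assembling the cofactor contributions and absorbing the $B_{n,1}L_{n-1}^{(\alpha,\beta)}(z)$ term into the sum gives
$$-\frac{z}{n}L_{n-1}^{(\alpha,\beta)}(z)+\sum_{i=1}^{n}(-1)^{i-1}B_{n,i}\,L_{n-i}^{(\alpha,\beta)}(z),$$
which is exactly $L_n^{(\alpha,\beta)}(z)$ by Theorem \ref{th:rec} (for $n\ge 3$; the cases $n=1,2$ are handled by direct evaluation of the small determinants against Proposition \ref{th:closedform}). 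The specialization to $\alpha=1$ is then immediate, since the remark following Theorem \ref{th:rec} gives $C_{n,m}=0$ for $m\ge 3$, so $B_{n,m}=0$ for $m\ge 3$ and the Hessenberg matrix collapses to the tridiagonal form displayed. The main bookkeeping obstacle is verifying the block-triangular decomposition of the minor for general $i$ and checking that the cofactor sign $(-1)^{i+1}$ matches the $(-1)^{m-1}$ appearing in the recurrence; both amount to routine index tracking.
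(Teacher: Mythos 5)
Your proof is correct and follows essentially the same route as the paper: induction on $n$, a cofactor expansion of the Hessenberg determinant reducing it to $-\frac{z}{n}L_{n-1}^{(\alpha,\beta)}(z)+\sum_{m=1}^{n}(-1)^{m-1}B_{n,m}L_{n-m}^{(\alpha,\beta)}(z)$, and an appeal to the recurrence of Theorem~\ref{th:rec}. The only cosmetic difference is that you expand once along the first column and use the block-triangularity of each minor, whereas the paper repeatedly expands along the first row; the resulting linear combination and the concluding step are identical.
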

\begin{proof}
The proof is done by induction on $n$.  For $n=1$, it is easy to see that
$$
L_1=-z L_0+B_{1,1}L_0=B_{1,1}-z\,.
$$
Assume that the result holds up to $n-1$. Then, by repeatedly expanding the determinant along the first row, the right-hand side is equal to
\begin{align*}
&\left(B_{n,1}-\frac{z}{n}\right)\left|\begin{array}{ccccc}
B_{n-1,1}-\frac{z}{n-1}&1&0&\cdots&0\\
B_{n-1,2}&B_{n-2,1}-\frac{z}{n-2}&1&&\vdots\\
B_{n-1,3}&B_{n-2,2}&&\ddots&0\\
\vdots&\vdots&&B_{2,1}-\frac{z}{2}&1\\
B_{n-1,n-1}&B_{n-2,n-2}&\cdots&B_{2,2}&B_{1,1}-z
\end{array}\right|\\
&\quad +\left|\begin{array}{ccccc}
B_{n,2}&1&0&\cdots&0\\
B_{n,3}&B_{n-2,1}-\frac{z}{n-2}&1&&\vdots\\
B_{n,4}&B_{n-2,2}&\ddots1&0\\
\vdots&\vdots&&B_{2,1}-\frac{z}{2}&1\\
B_{n,n}&B_{n-2,n-2}&\cdots&B_{2,2}&B_{1,1}-z
\end{array}\right|\\
&=\left(B_{n,1}-\frac{z}{n}\right)L_{n-1}-B_{n,2}L_{n-1}\\
&\quad +\left|\begin{array}{ccccc}
B_{n,3}&1&0&\cdots&0\\
B_{n,4}&B_{n-3,1}-\frac{z}{n-3}&1&&\vdots\\
B_{n,5}&B_{n-3,2}&&\ddots&0\\
\vdots&\vdots&&B_{2,1}-\frac{z}{2}&1\\
B_{n,n}&B_{n-3,n-3}&\cdots&B_{2,2}&B_{1,1}-z
\end{array}\right|\\
&=\left(B_{n,1}-\frac{z}{n}\right)L_{n-1}-B_{n,2}L_{n-1}+B_{n,3}L_{n-3}-\cdots+
\left|\begin{array}{cc}
B_{n,n-1}&1\\
B_{n,n}&B_n-z
\end{array}\right|\\
&=\left(B_{n,1}-\frac{z}{n}\right)L_{n-1}-B_{n,2}L_{n-1}+B_{n,3}L_{n-3}-\cdots\\
&\quad +(-1)^n B_{n,n-1}L_1+(-1)^{n+1}B_{n,n}\\
&=L_n\,.
\end{align*}
The last part depends on the recurrence relation in Theorem \ref{th:rec}.

When $\alpha=1$, by $C_{n,m}=0$ ($m\ge 3$) we know that $B_{n,m}=0$ ($n\ge 3$).
\end{proof}

From Theorem \ref{ex:det} with the recurrence relation in Theorem \ref{th:rec}, we have another explicit form of the polynomial $L_n^{(\alpha,\beta)}(z)$.

\begin{theorem}
For $n\ge 1$,
$$
L_n^{(\alpha,\beta)}(z)=\sum_{k=1}^n(-1)^{n-k}\sum_{t_1+\cdots+t_k=n\atop t_1,\dots,t_k\ge 1}a_0^{(t_1)}a_{t_1}^{(t_2)}a_{t_1+t_2}^{(t_3)}\cdots a_{t_1+\cdots+t_{k-1}}^{(t_k)}\,,
$$
where
$$
a_j^{(\ell)}=B_{\ell+j,\ell}-\frac{z}{j+1}\delta_{\ell,1}\quad(1\le\ell\le n-j;~j=0,1,\dots,n-2)
$$
and $\delta_{\ell,1}=1$ ($\ell=1$); $0$ ($\ell\ne 1$).
\label{th:ex2}
\end{theorem}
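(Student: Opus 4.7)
The plan is to use the Hessenberg determinant of Theorem \ref{ex:det} and expand it by the classical composition-sum formula for lower-Hessenberg matrices with ones on the superdiagonal. Let $M=(M_{i,j})$ denote that $n\times n$ matrix, with $M_{i,i+1}=1$, $M_{i,j}=0$ for $j>i+1$, and, for $i\ge j$, $M_{i,j}=B_{n-j+1,\,i-j+1}-\frac{z}{n-j+1}\delta_{i,j}$. The key identity I would establish by induction on $n$ is
\begin{equation*}
\det M=\sum_{k=1}^n(-1)^{n-k}\sum_{t_1+\cdots+t_k=n\atop t_1,\dots,t_k\ge 1}\prod_{i=1}^k M_{s_i,\,s_{i-1}+1},\qquad s_i:=t_1+\cdots+t_i.
\end{equation*}
The induction step is the same two-term cofactor expansion along the first row already carried out in the proof of Theorem \ref{ex:det}: the $M_{1,1}$-cofactor contributes compositions with $t_1=1$, the $M_{1,2}=1$ cofactor contributes compositions with $t_1\ge 2$ (after incrementing the first block by one), and in both cases the signs combine to give $(-1)^{n-k}$.

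Next I would identify each factor $M_{s_i,\,s_{i-1}+1}$ with the symbol $a_j^{(\ell)}$ from the statement. If $t_i\ge 2$, the entry is the pure sub-diagonal term $B_{n-s_{i-1},\,t_i}$, while if $t_i=1$ it is the diagonal entry $B_{n-s_{i-1},1}-z/(n-s_{i-1})$. A direct check then shows
\begin{equation*}
M_{s_i,\,s_{i-1}+1}=a_{n-s_i}^{(t_i)}
\end{equation*}
in both cases, using that $n-s_i+1=n-s_{i-1}$ when $t_i=1$, so that the denominator $j+1$ in $a_j^{(\ell)}$ matches the denominator $n-s_{i-1}$ appearing inside $M$.

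Finally I would put the sum into the exact form of the theorem by reversing each composition. Setting $u_i:=t_{k-i+1}$ gives a bijection on compositions of $n$ into $k$ positive parts; the partial sums transform as $n-s_i=u_1+\cdots+u_{k-i}$, so the product becomes
\begin{equation*}
\prod_{i=1}^k a_{n-s_i}^{(t_i)}=\prod_{p=1}^k a_{u_1+\cdots+u_{p-1}}^{(u_p)}=a_0^{(u_1)}a_{u_1}^{(u_2)}a_{u_1+u_2}^{(u_3)}\cdots a_{u_1+\cdots+u_{k-1}}^{(u_k)},
\end{equation*}
since scalar factors commute. Renaming $u$ back to $t$ and summing over all compositions yields the stated formula. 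I expect the main obstacle to be nothing conceptually deep: it is just keeping track of the two re-indexings and the off-by-one in the $z$-correction; establishing the Hessenberg composition-sum expansion cleanly is the most technical step, and it reduces to the same manipulation already performed in the proof of Theorem \ref{ex:det}.
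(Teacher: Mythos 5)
Your proof is correct and follows essentially the same route as the paper: an induction on $n$ that expands a unit-superdiagonal lower Hessenberg determinant by cofactors to produce the signed sum over compositions of $n$. The only differences are bookkeeping ones---you expand along the first row of the matrix exactly as written in Theorem \ref{ex:det} and then reverse each composition to match the stated indexing, whereas the paper works with the anti-transposed matrix (with entries written directly as $a_j^{(\ell)}$) and expands along the last column, which reproduces the recurrence of Theorem \ref{th:rec} and lands on the stated form without the reversal step.
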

\begin{proof}
In general, we shall prove that
\begin{align}
L_n:&=\left|\begin{array}{cccccc}
a_0^{(1)}&1&0&&&\\
a_0^{(2)}&a_1^{(1)}&1&&&\\
a_0^{(3)}&a_1^{(2)}&a_2^{(1)}&\ddots&&\\
\vdots&\vdots&&&1&0\\
a_0^{(n-1)}&a_1^{(n-2)}&&\ddots&a_{n-2}^{(1)}&1\\
a_0^{(n)}&a_1^{(n-1)}&a_2^{(n-2)}&\cdots&a_{n-2}^{(2)}&a_{n-1}^{(1)}
\end{array}\right|\notag\\
&=\sum_{k=1}^n(-1)^{n-k}\sum_{t_1+\cdots+t_k=n\atop t_1,\dots,t_k\ge 1}a_0^{(t_1)}a_{t_1}^{(t_2)}a_{t_1+t_2}^{(t_3)}\cdots a_{t_1+\cdots+t_{k-1}}^{(t_k)}\quad(n\ge 1)
\label{trudi}
\end{align}
with $L_0=1$.
The assertion (\ref{trudi}) is clear for $n=1,2$ because $L_1=a_0^{(1)}$ and $L_2=-a_0^{(2)}+a_0^{(1)}a_1^{(1)}$. Assume that the assertion (\ref{trudi}) is valid up to $n-1$.
In a similar method to Theorem \ref{ex:det} together with Theorem \ref{th:rec}, expanding the right-hand side of the determinant of (\ref{trudi}) along the last column repeatedly, we have
\begin{align*}
L_n&=a_{n-1}^{(1)}L_{n-1}-a_{n-2}^{(2)}L_{n-2}+a_{n-3}^{(3)}L_{n-3}-\cdots+(-1)^n a_1^{(n-1)}+(-1)^{n+1}a_0^{(n)}L_0\\
&=\sum_{k=1}^{n-1}(-1)^{n-k-1}\sum_{t_1+\cdots+t_k=n-1}a_0^{(t_1)}a_{t_1}^{(t_2)}a_{t_1+t_2}^{(t_3)}\cdots a_{t_1+\cdots+t_{k-1}}^{(t_k)}a_{n-1}^{(1)}\\
&\quad -\sum_{k=1}^{n-2}(-1)^{n-k}\sum_{t_1+\cdots+t_k=n-2}a_0^{(t_1)}a_{t_1}^{(t_2)}a_{t_1+t_2}^{(t_3)}\cdots a_{t_1+\cdots+t_{k-1}}^{(t_k)}a_{n-2}^{(2)}\\
&\quad +\sum_{k=1}^{n-1}(-1)^{n-k-1}\sum_{t_1+\cdots+t_k=n-3}a_0^{(t_1)}a_{t_1}^{(t_2)}a_{t_1+t_2}^{(t_3)}\cdots a_{t_1+\cdots+t_{k-1}}^{(t_k)}a_{n-3}^{(3)}\\
&-\cdots\\
&+(-1)^n\sum_{k=1}^{1}(-1)^{k-1}\sum_{t_1+\cdots+t_k=1}a_0^{(t_1)}a_{t_1}^{(t_2)}a_{t_1+t_2}^{(t_3)}\cdots a_{t_1+\cdots+t_{k-1}}^{(t_k)}a_{1}^{(n-1)}\\
&+(-1)^{n+1}a_0^{(n)}\\
&=\sum_{k=1}^{n-1}(-1)^{n-k-1}\sum_{t_1+\cdots+t_k=n-1}a_0^{(t_1)}a_{t_1}^{(t_2)}a_{t_1+t_2}^{(t_3)}\cdots a_{t_1+\cdots+t_{k-1}}^{(t_k)}a_{t_1+\cdots+t_k}^{(t_{k+1})}\\
&\qquad\qquad\qquad(t_1+\cdots+t_k=n-1,~t_{k+1}=1)\\
&\quad -\sum_{k=1}^{n-2}(-1)^{n-k}\sum_{t_1+\cdots+t_k=n-2}a_0^{(t_1)}a_{t_1}^{(t_2)}a_{t_1+t_2}^{(t_3)}\cdots a_{t_1+\cdots+t_{k-1}}^{(t_k)}a_{t_1+\cdots+t_k}^{(t_{k+1})}\\
&\qquad\qquad\qquad(t_1+\cdots+t_k=n-2,~t_{k+1}=2)\\
&\quad +\sum_{k=1}^{n-1}(-1)^{n-k-1}\sum_{t_1+\cdots+t_k=n-3}a_0^{(t_1)}a_{t_1}^{(t_2)}a_{t_1+t_2}^{(t_3)}\cdots a_{t_1+\cdots+t_{k-1}}^{(t_k)}a_{t_1+\cdots+t_k}^{(t_{k+1})}\\
&\qquad\qquad\qquad(t_1+\cdots+t_k=n-3,~t_{k+1}=3)\\
&-\cdots\\
&+(-1)^n\sum_{k=1}^{1}(-1)^{k-1}\sum_{t_1+\cdots+t_k=1}a_0^{(t_1)}a_{t_1}^{(t_2)}a_{t_1+t_2}^{(t_3)}\cdots a_{t_1+\cdots+t_{k-1}}^{(t_k)}a_{t_1+\cdots+t_k}^{(t_{k+1})}\\
&\qquad\qquad\qquad(t_1+\cdots+t_k=1,~t_{k+1}=n-1)\\
&+(-1)^{n+1}a_0^{(n)}\\
&=\sum_{k=1}^n(-1)^{n-k}\sum_{t_1+\cdots+t_k=n\atop t_1,\dots,t_k\ge 1}a_0^{(t_1)}a_{t_1}^{(t_2)}a_{t_1+t_2}^{(t_3)}\cdots a_{t_1+\cdots+t_{k-1}}^{(t_k)}\,.\\
&\qquad\qquad\qquad(k\to k-1)
\end{align*}
\end{proof}

\end{document}